\DeclareMathOperator{\tr}{tr}
\DeclareMathOperator{\Rea}{Re}
\begin{document}

\title*{On conditions for weak conservativeness of regularized explicit finite-difference schemes for 1D barotropic gas dynamics equations}
\titlerunning{On conditions for weak conservativeness of explicit finite-difference schemes}
\author{A. Zlotnik and T. Lomonosov}
\institute{A. Zlotnik \at National Research University Higher School of Economics, Myasnitskaya 20, 101000 Moscow, Russia \email{azlotnik@hse.ru}
\and T. Lomonosov \at National Research University Higher School of Economics, Myasnitskaya 20, 101000 Moscow, Russia \email{tlomonosov@hse.ru}}
\maketitle

\abstract{We consider explicit two-level three-point in space finite-difference sche\-mes for solving 1D barotropic gas dynamics equations.
The schemes are based on special quasi-gasdynamic and quasi-hydrodynamic regularizations of the system.
We linearize the schemes on a constant solution and
derive the von Neumann type necessary condition and a CFL type criterion (necessary and sufficient condition) for weak conservativeness in $L^2$ for the corresponding initial-value problem on the whole line.
The criterion is essentially narrower than the necessary condition and wider than a sufficient one obtained recently in a particular case; moreover, it corresponds most well to numerical results for the original gas dynamics system.}

\medskip \textbf{Keywords}: gas dynamics, barotropic quasi-gas dynamics system of equations, explicit finite-difference schemes, stability criterion, weak conservativeness

\section{\large Introduction}
The stability theory for finite-difference schemes for model problems in gas dynamics is well presented in the literature \cite{B73,Cou13,GV96,GR73,GKO95,LeV04,RM72}.
In this paper we consider some finite-difference schemes for solving 1D barotropic gas dynamics equations.
The schemes are explicit, two-level in time and use a symmetric three-point stencil in space.
Their construction is based on special quasi-gasdynamic and quasi-hydrodynamic \cite{Ch04,E07,Sh09,Z08,ZCh08} regularizations of the original equations (without a regularization, the schemes are unstable).
The schemes of this kind were successfully applied in numerous and various practical applications, in particular, see
\cite{BZS16,EB11,EZI17,ZG16}, but their theory is not developed so well.
\par We linearize the schemes on a constant solution and derive both the von Neumann type necessary condition and a CFL type criterion for weak conservativeness in $L^2$ for the corresponding initial-value problem on the whole line.
The weak conservativeness in $L^2$ means the uniform in time bound for the norm of scaled solution by the norm of initial data instead of the energy conservation law for the linearized original system, i.e., the acoustics system of equations.
Our numerical experience show that validity of the weak conservativeness property is important since it prevents numerical solutions from the well-known possible spurious oscillations.
The property guarantees the uniform in time $L^2$ stability with respect to initial data.
\par Since in practice necessary conditions are often in use (a derivation of sufficient conditions is much more complicated in general), it is important to know to what extent this is lawful to do.
The criterion turns out to be essentially narrower than the necessary condition and at the same time wider than a sufficient condition obtained recently in a particular case in \cite{SSh13}.
Moreover, namely the criterion corresponds most well to results of numerical experiments for the original gas dynamics system.
Therefore the criterion (but not the necessary condition or sufficient one) is most adequate and useful for practical purposes.
\section{\large Systems of equations, finite-difference schemes and their linearization}
\par The 1D barotropic gas dynamics (Euler) system of equations consists in the mass and momentum balance equations
\begin{gather}
 \partial_t\rho+\partial_x(\rho u) = 0,\ \
 \partial_t(\rho u)+\partial_xp(\rho)=0,
\label{eq:bgd}
\end{gather}
where $\rho>0$, $u$ and $p$ are the gas density and velocity (the sought functions) and pressure.
We assume that $p'(\rho)>0$ and consider the equations for $x\in\mathbb{R}$ and $t>0$.
\par The 1D barotropic quasi-gas dynamics (QGD) system of equations consists in the regularized mass and momentum balance equations
\begin{gather}
\partial_t\rho+\partial_xj = 0,\ \
\partial_t(\rho u)+\partial_x\big(ju+p(\rho)-\Pi\big)=0,
\label{eq:qgd1}\\[1mm]
j=\rho(u-w),\ \
w = \frac{\tau}{\rho}u\partial_x(\rho u)+\hat w,\ \
\hat w = \frac{\tau}{\rho}[\rho u\partial_xu + p'(\rho)],
\label{eq:qgd2}\\[1mm]
\Pi = \Pi_{NS} + \rho u\hat w + \tau p'(\rho)\partial_x(\rho u),\ \ \Pi_{NS}=\mu(\rho)\partial_xu.
\label{eq:qgd3}
\end{gather}
Here $j$ and $\Pi$ are the regularized mass flux and stress,
$w$ and $\hat w$ are the regularizing velocities,
$\tau=\tau(\rho)>0$ is a regularization parameter
and $\Pi_{NS}$ is the Navier-Stokes viscous stress with $\mu(\rho)\geq 0$ being proportional to the viscosity coefficient.
In the barotropic case, quasi-gasdynamic and quasi-hydrodynamic systems were introduced and investigated (in multidimensional case) in \cite{ZCh08,ZCMMP10,ZMM12a}.

\par The QGD system is simplified into the original system \eqref{eq:bgd} for $\tau=\mu=0$ and the Navier-Stokes system of equations for viscous compressible barotropic gas flow for $\tau=0$ and $\mu>0$.

\par System \eqref{eq:bgd} can be linearized on a constant solution $\rho_*\equiv\textrm{const}>0$ and $u_*=0$.
Substituting the solution in the form $\rho=\rho_*+\Delta\rho$ and $u=u_*+\Delta u$ in the equations and neglecting the terms having the second order of smallness with respect to $\Delta\rho$ and $\Delta u$ and their derivatives leads us to the following system of equations:
\begin{gather}
 \partial_t\Delta\rho+\rho_*\partial_x\Delta u=0,\ \
 \rho_*\partial_t\Delta u+p'(\rho_*)\partial_x\Delta\rho u=0.
\label{eq:bgdl}
\end{gather}
For the dimensionless unknowns $\tilde{\rho}=\frac{\Delta\rho}{\rho_*}$ and $\tilde{u}=\frac{\Delta u}{\sqrt{p'(\rho_*)}}$ we gain the acoustics system of equations:
\begin{gather}
 \partial_t\tilde{\rho}+c_*\partial_x\tilde{u}=0,\ \
 \partial_t\tilde{u}+c_*\partial_x\tilde{\rho}=0.
\label{eq:bgdln}
\end{gather}
Hereafter $c_*=\sqrt{p'(\rho_*)}$ is the background velocity of sound.
Given the initial data $\tilde{\rho}|_{t=0}=\tilde{\rho}_0$ and $\tilde{u}|_{t=0}=\tilde{u}_0$ (that one can consider complex-valued), for the solution to the last system the following energy conservation law holds
\begin{gather}
 \|\tilde{\rho}(\cdot,t)\|_{L^2(\mathbb{R})}^2+\|\tilde{u}(\cdot,t)\|_{L^2(\mathbb{R})}^2
 =\|\tilde{\rho}_0\|_{L^2(\mathbb{R})}^2+\|\tilde{u}_0\|_{L^2(\mathbb{R})}^2\ \ \text{for}\ \ t\geq 0.
\label{eq:conlaw}
\end{gather}
\par Now we pass to discretization.
Let $\omega_h$ be a uniform mesh on $\mathbb{R}$ with the nodes $x_k=kh$, $k\in\mathbb{Z}$, and step $h=X/N$.
Let $\omega^*_h$ be an auxiliary mesh with the nodes $x_{k+1/2} = (k+0.5)h$, $k\in\mathbb{Z}$.
Define a uniform mesh in $t$ with the nodes $t_m=m\Delta t$, $m\geq 0$, and step $\Delta t>0$.
We define the shift, averaging and difference quotient operators
\begin{gather*}
v_{\pm,k}=v_{k\pm 1},\ \
(sv)_{k-1/2}=\frac{v_k+v_{k+1}}{2},\ \ (\delta v)_{k-1/2}=\frac{v_k-v_{k-1}}{h},\\[1mm](\delta^\ast y)_k = \frac{y_{k+1/2}-y_{k-1/2}}{h},\ \ \delta_tv=\frac{v^{+}-v}{\Delta t},\ \ v^{+,m}=v^{m+1}.
\end{gather*}

\par We first consider a standard explicit two-level in time and three-point symmetric in space discretization of the QGD equations \eqref{eq:qgd1}-\eqref{eq:qgd3}:
\begin{gather}
\delta_t \rho + \delta^\ast j = 0,\ \
\delta_t(\rho u) + \delta^\ast\big(jsu +p(s\rho) - \Pi\big)=0\ \ \text{on}\ \ \omega_h,
\label{eq:dssw1}\\[1mm]
 j =(s\rho)su-(s\rho)w,\
 (s\rho)w =(s\tau)\big[\delta(\rho u)\big]su + (s\rho)\hat w,
\label{eq:dssw2}\\[1mm]
 (s\rho)\hat w = (s\tau)\big[(s\rho)(su)\delta u + \delta p(\rho)\big],
\label{eq:dssw3}\\[1mm]
 \Pi = \mu\delta u + (su)(s\rho)\hat w + (s\tau)\big[p'(s\rho)\big]\delta(\rho u).
\label{eq:dssw4}
\end{gather}
The main unknown functions $\rho>0$, $u$ and the parameter $\tau$ are defined on $\omega_h$ whereas $j,w,\hat{w},\Pi$ and $\mu$ are defined on $\omega_h^*$.

\par In \cite{ZMM12b} two non-standard spatial discretizations of the QGD equations \eqref{eq:qgd1}-\eqref{eq:qgd3} were constructed which are weakly conservative in energy (see their generalization to a multidimensional case in \cite{ZCMMP16}).
One of them has the ``enthalpy'' form
\begin{gather}
 \delta_t \rho + \delta^\ast j = 0,\ \
 \delta_t(\rho u) + \delta^\ast(jsu-\Pi) + s^\ast\big[(s\rho)\delta\textrm{h}(\rho)\big]=0,
\label{eq:dssw1 A}\\[1mm]
 j =s\rho\cdot su-s\rho\cdot w,\ \
 s\rho\cdot w = \big[(\tau\partial_x)_h(\rho u)\big]su + (s\rho)\hat w,\ \
\label{eq:dssw2 A}\\[1mm]
 \hat w = (s\tau)\big[(su)\delta u + \delta\textrm{h}(\rho)\big],\ \
 \Pi =\mu\delta u + (su)(s\rho)\hat w + p'(s\rho)(\tau \partial_x)_h(\rho u),
\label{eq:dssw3 A}\\[1mm]
 (\tau \partial_x)_h(\rho u) = \Big(s\frac{\tau}{\textrm{h}'(\rho)}\Big)\big\{[\delta\textrm{h}(\rho)]su + p'(s\rho)\delta u\big\},
\label{eq:dssw4 A}
\end{gather}
where $\textrm{h}(\rho)=\int_{r_0}^\rho\frac{p'(r)}{r}\,dr$, with some $r_0>0$, is the gas \textit{enthalpy} and thus $\textrm{h}'(\rho)=\frac{p'(\rho)}{\rho}$.
In the isentropic case $p(\rho)=p_1\rho^\gamma$ with $\gamma>1$, one can take $r_0=0$ and then $\textrm{h}(\rho)=\frac{\gamma}{\gamma-1}\frac{p(\rho)}{\rho}$ and $\textrm{h}'(\rho)=\gamma\frac{p(\rho)}{\rho^2}$.
Notice the non-standard $\textrm{h}(\rho)$-dependent discretizations of $\partial_xp(\rho)$ in \eqref{eq:dssw1 A} and \eqref{eq:dssw3 A} and $\tau\partial_x(\rho u)$ in \eqref{eq:dssw2 A}-\eqref{eq:dssw3 A}, see \eqref{eq:dssw4 A}.

\par We linearize scheme \eqref{eq:dssw1}-\eqref{eq:dssw4} on a constant solution $\rho_*\equiv\textrm{const}>0$ and $u_*=0$.
To do that, we write its solution in the form $\rho=\rho_*+\Delta\rho$ and $u=u_*+\Delta u$, neglect terms having the second order of smallness with respect to $\Delta\rho$ and $\Delta u$ and obtain
\begin{gather*}
 \delta_t\Delta\rho+\rho_*\delta^*s\Delta u-\tau(\rho_*)p'(\rho_*)\delta^*\delta\Delta\rho=0,
\label{eq:ds1}
\\[1mm]
 \rho_*\delta_t\Delta u+p'(\rho_*)\delta^*s\Delta\rho-\big[\mu(\rho_*)+\tau(\rho_*)\rho_*p'(\rho_*)\big]\delta^*\delta\Delta u=0.
\label{eq:ds2}
\end{gather*}
For the dimensionless unknowns $\tilde{\rho}=\frac{\Delta\rho}{\rho_*}$ and $\tilde{u}=\frac{\Delta u}{c_*}$ we get equations
\begin{gather}
 \delta_t\tilde{\rho}+c_*\delta^*s\tilde{u}-\tau(\rho_*)c_*^2\delta^*\delta\tilde{\rho}=0,
\label{eq:ds1n}
\\[1mm]
 \delta_t\tilde{u}+c_*\delta^*s\tilde{\rho}
 -\Big[\frac{\mu(\rho_*)}{\rho_*}+\tau(\rho_*)c_*^2\Big]\delta^*\delta\tilde{u}=0
\label{eq:ds2n}
\end{gather}
(cf. systems \eqref{eq:bgdl} and \eqref{eq:bgdln}).
The linearization of scheme \eqref{eq:dssw1 A}-\eqref{eq:dssw4 A} is the same.
\par Notice that since $u_*=0$, the linearization result remains the same if it would be $w=\hat{w}$ and the terms dependent on $u$ were omitted in the definition of these variables, i.e., for example, $w=\hat{w}=\frac{\tilde{\tau}}{s\rho}\delta p(\rho)$ instead of formulas in \eqref{eq:dssw2}-\eqref{eq:dssw3}.

\par We assume that the regularization parameter and viscosity coefficient are given by usual QGD-formulas
\begin{equation*}
 \tau(\rho)= \frac{\alpha h}{\sqrt{p'(\rho)}},\ \
 \mu(\rho) = \alpha_s\tau(\rho)\rho p'(\rho),
\end{equation*}
where $\alpha>0$ and $\alpha_s\geq 0$ are parameters.
Then omitting tildes above $\rho$ and $u$, equations \eqref{eq:ds1n}-\eqref{eq:ds2n} can be rewritten in the following recurrent form
\begin{gather}
 \rho^{+}=\rho-\frac{\beta}{2}(u_+-u_-)+\alpha \beta(\rho_--2\rho+\rho_-),
\label{eq:ds1e}\\[1mm]
 u^{+}=u-\frac{\beta}{2}(\rho_+-\rho_-)+\varkappa\alpha\beta(u_+-2u+ u_-)
\label{eq:ds2e}
\end{gather}
with three parameters $\alpha$, $\beta:=c_*\frac{\Delta t}{h}$ and $\varkappa:=\alpha_s+1\geq 1$.
The functions $\rho^0$ and $u^0$ are given, i.e., we consider the initial-value problem for the scheme.
Below it is convenient to consider $\rho$ and $u$ as complex-valued mesh functions.
\section{\large Weak conservativeness analysis}
Let $\mathbf{y}^m=(\rho^m\ u^m)^T$, $m\geq 0$, be a column-vector function on $\omega_h$
and the linearized difference scheme \eqref{eq:ds1e}-\eqref{eq:ds2e} be rewritten in a matrix form
\begin{gather}
\mathbf{y}^{+}
= \begin{pmatrix}
    \alpha\beta & \frac{\beta}{2}
\\[1mm]
\frac{\beta}{2} & \varkappa\alpha\beta
\end{pmatrix}
\mathbf y_-
+\begin{pmatrix}
1-2\alpha\beta&0
\\[1mm]
0&1-2\varkappa\alpha\beta
\end{pmatrix}\mathbf{y}
+
\begin{pmatrix}
\alpha\beta & -\frac{\beta}{2}
\\[1mm]
-\frac{\beta}{2} & \varkappa\alpha\beta
\end{pmatrix}
\mathbf{y}_+.
\label{eq:dsm}
\end{gather}
\par Let $H$ be a Hilbert space of complex valued square-summable on $\omega_h$ vector functions, i.e. having a finite norm
\[
 \|\mathbf{y}\|_H=\Big(h\sum_{k=-\infty}^\infty|\mathbf{y}_k|^2\Big)^{1/2}.
\]
\par For $\mathbf{y}^0=(\rho^0\ u^0)^T\in H$ we have that $\mathbf{y}^m\in H$ for all $m\geq 1$.
We define a \textit{weak conservativeness} of scheme \eqref{eq:dsm} as validity of the bound
\begin{gather}
 \sup_{m\geq 0}\|\mathbf{y}^m\|_H\leq \|\mathbf{y}^0\|_H\ \ \forall\mathbf{y}^0\in H.
\label{eq:stab1}
\end{gather}
This definition is motivated by the energy conservation law \eqref{eq:conlaw} for the acoustics system of equation \eqref{eq:bgdln}.
It is essential to notice that for the linearized QGD-system \eqref{eq:qgd1}-\eqref{eq:qgd3} namely the corresponding inequality holds in place of equality \eqref{eq:conlaw} so that it is natural to study the bound for schemes based on such a system.
Of course, estimate \eqref{eq:stab1} guarantees the uniform in time stability in $H$ with respect to initial data.
\par We first substitute a partial solution in the form $\mathbf{y}_k^m=e^{\mathbf{i}k\xi}\mathbf{v}^m(\xi)$, $k\in\mathbb{Z}$, $m\geq 0$, where $\mathbf{i}$ is the imaginary unit and $0\leq\xi\leq 2\pi$ is a parameter, into \eqref{eq:dsm} and obtain
\begin{gather}
 \mathbf{v}^{+}(\xi)=G(\xi)\mathbf{v}(\xi),\ \
G(\xi)
 =\begin{pmatrix}
1-\omega_1 & -\mathbf{i}\omega_2
\\[1mm]
-\mathbf{i}\omega_2 & 1-\varkappa\omega_1
\end{pmatrix},
\label{eq:stab3}
\end{gather}
where we denote
$\omega_1=4\alpha\beta\theta$, $\theta=\sin^2\frac{\xi}{2}\in[0,1]$ and $\omega_2=\beta\sin\xi$ for brevity.
Below it is important that $\omega_2^2=4\beta^2\theta(1-\theta)$.

\par It is known  (see similar formulas in \cite{GR73}) that if $\mathbf{y}^0=(\rho^0\ u^0)^T\in H$, then there exists a function
$\mathbf{v}^0\in L^2(0,2\pi)$ such that
\[
 \mathbf{v}^0(\xi)=\frac{1}{\sqrt{2\pi}}\sum_{k=-\infty}^\infty\mathbf{y}_k^0 e^{-\mathbf{i}k\xi},
\]
and we can write the solution to scheme \eqref{eq:dsm} in an integral form
\[
 \mathbf{y}_k^m=\frac{1}{\sqrt{2\pi}}\int_0^{2\pi}\mathbf{v}^m(\xi)e^{\mathbf{i}k\xi}\,d\xi,\ \ k\in\mathbb{Z},
\]
where $\mathbf{v}^m\in L^2(0,2\pi)$ due to \eqref{eq:stab3}.
The following Parseval identity also holds
\begin{gather}
 \|\mathbf{y}^m\|_H=\sqrt{h}\,\|\mathbf{v}^m\|_{L^2(0,2\pi)},\ \ m\geq 0.
\label{eq:stab5}
\end{gather}
\par The von Neumann type spectral condition
\begin{gather}
 \max_{0\leq\xi\leq 2\pi}\max_l\big|\lambda_l\big(G(\xi)\big)\big|\leq 1
\label{eq:stab7}
\end{gather}
is known to be \textit{a necessary condition} for property \eqref{eq:stab1} to hold (see similar result in  \cite{GR73}).
Hereafter $\lambda_l(A)$ are eigenvalues of a matrix $A$.
\par Let us determine the spectral form of property \eqref{eq:stab1}.
\begin{lemma}
\label{lem1}
Validity of the spectral bound
\begin{gather}
 \max_{0\leq\xi\leq 2\pi}\max_l\lambda_l\big((G^*G)(\xi)\big)\leq 1
\label{eq:stab9}
\end{gather}
is necessary and sufficient for the weak conservativeness property \eqref{eq:stab1} to hold.
\end{lemma}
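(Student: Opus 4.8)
The plan is to transfer the whole question to the amplification matrix $G(\xi)$ via the Fourier representation already set up in the excerpt, and to recognize the spectral quantity in \eqref{eq:stab9} as the squared largest singular value of $G(\xi)$. Set $\sigma(\xi)=\big(\max_l\lambda_l((G^*G)(\xi))\big)^{1/2}$; since $(G^*G)(\xi)$ is Hermitian and positive semidefinite, $\sigma(\xi)$ is exactly the largest singular value (operator norm) of $G(\xi)$, so \eqref{eq:stab9} is equivalent to $\sigma(\xi)\leq 1$ for all $\xi\in[0,2\pi]$. Iterating the recurrence in \eqref{eq:stab3} gives $\mathbf{v}^m(\xi)=G(\xi)^m\mathbf{v}^0(\xi)$, and combining this with the Parseval identity \eqref{eq:stab5} I would rewrite both sides of \eqref{eq:stab1} as
\[
\|\mathbf{y}^m\|_H^2=h\int_0^{2\pi}\big|G(\xi)^m\mathbf{v}^0(\xi)\big|^2\,d\xi,\qquad
\|\mathbf{y}^0\|_H^2=h\int_0^{2\pi}\big|\mathbf{v}^0(\xi)\big|^2\,d\xi.
\]
Because $\mathbf{y}^0\mapsto\mathbf{v}^0$ is an isometric isomorphism of $H$ onto $L^2(0,2\pi)$ (this is what \eqref{eq:stab5} and the inversion formula express), property \eqref{eq:stab1} is equivalent to the statement that $\int_0^{2\pi}|G(\xi)^m\mathbf{v}^0(\xi)|^2\,d\xi\leq\int_0^{2\pi}|\mathbf{v}^0(\xi)|^2\,d\xi$ for every $m\geq 0$ and every $\mathbf{v}^0\in L^2(0,2\pi)$.

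For sufficiency I would use the pointwise bound $|G(\xi)^m\mathbf{v}^0(\xi)|\leq\sigma(\xi)^m|\mathbf{v}^0(\xi)|$, valid because the operator norm is submultiplicative. If \eqref{eq:stab9} holds, then $\sigma(\xi)^m\leq 1$ for a.e.\ $\xi$ and every $m$, and integrating the pointwise inequality over $(0,2\pi)$ immediately yields \eqref{eq:stab1}.

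For necessity I would argue by contraposition. If \eqref{eq:stab9} fails, there is $\xi_0$ with $\sigma(\xi_0)>1$, hence a unit vector $\mathbf{e}\in\mathbb{C}^2$ with $|G(\xi_0)\mathbf{e}|>1$. Since $\xi\mapsto|G(\xi)\mathbf{e}|$ is continuous, there is a neighborhood $U\subset[0,2\pi]$ of $\xi_0$ and a $\delta>0$ with $|G(\xi)\mathbf{e}|\geq 1+\delta$ on $U$. Taking the concentrated test function $\mathbf{v}^0(\xi)=\mathbf{e}$ on $U$ and $\mathbf{v}^0(\xi)=0$ otherwise (which lies in $L^2(0,2\pi)$ and therefore is the Fourier image of some $\mathbf{y}^0\in H$), already at $m=1$ we obtain
\[
\int_0^{2\pi}\big|G(\xi)\mathbf{v}^0(\xi)\big|^2\,d\xi=\int_U\big|G(\xi)\mathbf{e}\big|^2\,d\xi\geq(1+\delta)^2|U|>|U|=\int_0^{2\pi}\big|\mathbf{v}^0(\xi)\big|^2\,d\xi,
\]
so $\|\mathbf{y}^1\|_H>\|\mathbf{y}^0\|_H$ and \eqref{eq:stab1} is violated.

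The main obstacle I anticipate is the necessity direction: one must pass from a single bad frequency $\xi_0$ to a set of positive measure, which relies on the continuity of the quadratic form $\xi\mapsto|G(\xi)\mathbf{e}|^2$, and one must ensure the concentrated profile genuinely arises from some $\mathbf{y}^0\in H$, which relies on the surjectivity of the Fourier isometry \eqref{eq:stab5}. Both are guaranteed by the smooth dependence of $G$ on $\xi$ and by the isomorphism, but they are precisely the places where the equivalence would break down without these ingredients.
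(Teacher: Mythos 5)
Your proof is correct, and it shares the paper's backbone: the Fourier--Parseval reduction \eqref{eq:stab5} transfers \eqref{eq:stab1} to a pointwise question about $G(\xi)$, with $\max_l\lambda_l\big((G^*G)(\xi)\big)$ identified as the squared operator norm $\|G(\xi)\|^2$. Where you genuinely diverge is in how the two implications are executed. The paper diagonalizes $(G^*G)(\xi)=U^*(\xi)\Lambda(\xi)U(\xi)$, sets $\mathbf{z}=U\mathbf{v}$, and asserts the exact identity $\|\mathbf{y}^m\|_H^2=h\|\Lambda^{m/2}\mathbf{z}^0\|_{L^2(0,2\pi)}^2$ for all $m$, from which both directions drop out simultaneously. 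You instead prove sufficiency from the submultiplicative bound $|G(\xi)^m\mathbf{v}^0(\xi)|\le\|G(\xi)\|^m|\mathbf{v}^0(\xi)|$, and necessity by contraposition with an explicit test function concentrated on a neighborhood of a bad frequency $\xi_0$, using continuity of $\xi\mapsto|G(\xi)\mathbf{e}|$ and the surjectivity of the Fourier isometry. Your route costs a little more work on necessity (the paper gets it directly from the identity at $m=1$), but it buys robustness on sufficiency: the paper's identity for $m\ge 2$ implicitly uses $(G^m)^*G^m=(G^*G)^m$, which requires $G$ to be normal, and here $G(\xi)$ is \emph{not} normal when $\varkappa\ne 1$ and $\omega_1\omega_2\ne 0$ (the off-diagonal entries of $G^*G$ are nonzero and purely imaginary, so $G^*G\ne GG^*$). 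The inequality $\|G^m\|\le\|G\|^m$ that you use needs no such hypothesis, so your argument quietly closes this small gap; the alternative repair is to iterate the one-step estimate $\|\mathbf{y}^{m+1}\|_H\le\|\mathbf{y}^m\|_H$, which is precisely what the paper's Remark records. Both proofs rely on the same two essential ingredients you flag at the end: smooth dependence of $G$ on $\xi$ and the unitarity of the Fourier map from $H$ onto $L^2(0,2\pi)$.
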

\begin{proof}
\par Due to the Parseval identity \eqref{eq:stab5} and formula \eqref{eq:stab3} we have
\[
 h^{-1}\|\mathbf{\hat{y}}\|_H^2=\|\mathbf{\hat{v}}\|_{L^2(0,2\pi)}^2=\|G\mathbf{v}\|_{L^2(0,2\pi)}^2
 =(G^*G\mathbf{v},\mathbf{v})_{L^2(0,2\pi)}.
\]
Since $(G^*G)(\xi)\geq 0$ is a Hermitian matrix, it has a spectral decomposition $(G^*G)(\xi)=U^*(\xi)\Lambda(\xi)U(\xi)$, where $U(\xi)$ is a unitary matrix, and $\Lambda(\xi)$ is a diagonal matrix with numbers
$\lambda_l\big((G^*G)(\xi)\big)\geq 0$ forming its diagonal.
Hence for $z(\xi):=U(\xi)\mathbf{v}(\xi)$ we have
\[
 (G^*G\mathbf{v},\mathbf{v})_{L^2(0,2\pi)}=(\Lambda\mathbf{z},\mathbf{z})_{L^2(0,2\pi)}=\|\Lambda^{1/2}\mathbf{z}\|_{L^2(0,2\pi)}^2.
\]
Thus $\|\mathbf{y}^m\|_H^2=h\|\Lambda^{m/2}\mathbf{z}^0\|_{L^2(0,2\pi)}^2$ for $m\geq 0$, and bound \eqref{eq:stab1} is equivalent to the following one
\[
 \sup_{m\geq 0}\|\Lambda^{m/2}\mathbf{z}^0\|_{L^2(0,2\pi)}^2\leq\|\mathbf{z}^0\|_{L^2(0,2\pi)}^2\ \ \forall\mathbf{z}^0\in L^2(0,2\pi).
\]
It holds if and only if the spectral bound \eqref{eq:stab9} holds.
\end{proof}
\begin{remark}
Under validity of the spectral bound \eqref{eq:stab9}, the norm $\|\mathbf{y}^m\|_H$ is actually non-increasing in $m\geq 0$ that serves as a stronger property than \eqref{eq:stab1}.
\end{remark}
\par In the proof of this lemma, the specific form and dimension of the matrix $G$ are clearly inessential, and actually it holds in general case.
\par In our case the matrix $G^*G$ has the form
\begin{equation*}
 G^*G=
\begin{pmatrix}
 (1-\omega_1)^2+\omega_2^2      & -\mathbf{i}(1-\varkappa)\omega_1\omega_2
\\[1mm]
 \mathbf{i}(1-\varkappa)\omega_1\omega_2 & (1-\varkappa\omega_1)^2 + \omega_2^2
\end{pmatrix}.
\end{equation*}
Note that $G^{\ast}G=\big[(1-\omega_1)^2+\omega_2^2\big]I$ in the simplest case $\varkappa=1$, where $I$ is a unit matrix.
\begin{theorem}
\label{th:1}
The necessary spectral condition \eqref{eq:stab7} holds if and only if
\begin{equation}
 \beta\leq\min\Big\{(\varkappa+1)\alpha,\frac{1}{2\varkappa\alpha}\Big\}.
\label{eq:nsc}
\end{equation}
\end{theorem}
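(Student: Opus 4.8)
The plan is to work entirely with the $2\times2$ amplification matrix $G(\xi)$ from \eqref{eq:stab3} and to reduce the spectral condition \eqref{eq:stab7} to an elementary study of two scalar inequalities in the single variable $\theta=\sin^2\frac\xi2\in[0,1]$. First I would record that the eigenvalues of $G$ solve $\lambda^2-T\lambda+\Delta=0$ with the \emph{real} quantities $T:=\tr G=2-(1+\varkappa)\omega_1$ and $\Delta:=\det G=(1-\omega_1)(1-\varkappa\omega_1)+\omega_2^2$. Since $T,\Delta\in\mathbb{R}$, the classical criterion for both roots of a real quadratic to lie in the closed unit disc applies: $\max_l|\lambda_l(G)|\le1$ holds if and only if $\Delta\le1$ and $|T|\le1+\Delta$. (In the complex-root regime this says $|\lambda|^2=\Delta\le1$; in the real-root regime it is the pair of endpoint conditions $\pm T\le1+\Delta$ obtained from requiring the two real roots to lie in $[-1,1]$.) A short computation gives $1+\Delta-T=\varkappa\omega_1^2+\omega_2^2\ge0$ automatically, so the inequality $T\le1+\Delta$ is vacuous, and \eqref{eq:stab7} is equivalent to requiring $\Delta\le1$ and $1+T+\Delta\ge0$ for \emph{every} $\theta\in[0,1]$.

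Next I would substitute $\omega_1=4\alpha\beta\theta$ and $\omega_2^2=4\beta^2\theta(1-\theta)$. After cancelling the common factor $4\beta\theta>0$, the first requirement becomes the affine-in-$\theta$ inequality $\beta\,[\,1+(4\varkappa\alpha^2-1)\theta\,]\le(1+\varkappa)\alpha$, whose maximum over $[0,1]$ is attained at an endpoint, namely at the values $\beta$ (at $\theta=0$) and $4\varkappa\alpha^2\beta$ (at $\theta=1$). The second requirement reduces to $h(\theta)\ge0$ on $[0,1]$, where $h(\theta)=\beta^2(4\varkappa\alpha^2-1)\theta^2+\beta\bigl(\beta-2(1+\varkappa)\alpha\bigr)\theta+1$, with $h(0)=1$ and the factored endpoint value $h(1)=4\varkappa\bigl(\alpha\beta-\tfrac12\bigr)\bigl(\alpha\beta-\tfrac1{2\varkappa}\bigr)$.

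For necessity I would simply test these conditions at the two endpoints of $[0,1]$. Letting $\theta\to0^+$ in the first inequality forces $\beta\le(1+\varkappa)\alpha$; at $\theta=1$ one has $\omega_2=0$, so $G$ is diagonal with entries $1-4\alpha\beta$ and $1-4\varkappa\alpha\beta$, and $\max_l|\lambda_l|\le1$ forces $\varkappa\alpha\beta\le\tfrac12$, i.e. $\beta\le\frac{1}{2\varkappa\alpha}$ (the binding bound since $\varkappa\ge1$). Together these give \eqref{eq:nsc}. For sufficiency I would assume \eqref{eq:nsc}, i.e. $\beta\le(1+\varkappa)\alpha$ and $\alpha\beta\le\frac1{2\varkappa}$. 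Then $\Delta\le1$ is immediate, because both endpoint values $\beta$ and $4\varkappa\alpha^2\beta$ are $\le(1+\varkappa)\alpha$, using $\alpha\beta\le\frac1{2\varkappa}\le\frac{1+\varkappa}{4\varkappa}$ for $\varkappa\ge1$.

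It remains to verify $h\ge0$ on $[0,1]$, and here the only delicate case is $4\varkappa\alpha^2>1$, in which $h$ is an upward parabola and one must rule out an interior minimum; this is the main obstacle. I expect to settle it by locating the vertex $\theta^*=\frac{2(1+\varkappa)\alpha-\beta}{2\beta(4\varkappa\alpha^2-1)}$ and showing $\theta^*\ge1$: indeed $\alpha\beta\le\frac1{2\varkappa}$ yields $8\varkappa\alpha^2\beta\le4\alpha\le2(1+\varkappa)\alpha\le2(1+\varkappa)\alpha+\beta$, which is exactly the inequality $\theta^*\ge1$. Hence $h$ is non-increasing on $[0,1]$, so its minimum there is $h(1)\ge0$ (both factors being nonpositive since $\alpha\beta\le\frac1{2\varkappa}\le\frac12$), while the complementary cases $4\varkappa\alpha^2\le1$ are easier, the parabola then being flat or concave with minimum at an endpoint. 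This establishes \eqref{eq:stab7} for all $\theta$ and closes the equivalence.
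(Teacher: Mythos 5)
Your proposal is correct and follows essentially the same route as the paper: both reduce \eqref{eq:stab7} to the pair of conditions $\det G\le 1$ and $1+\tr G+\det G\ge 0$ (you via the Schur--Cohn/Jury root-location criterion, the paper via the M\"obius map of the unit disc to the left half-plane), with the remaining condition $1-\tr G+\det G=\varkappa\omega_1^2+\omega_2^2\ge0$ noted as automatic in both, and then to the identical affine and quadratic inequalities in $\theta$ settled by endpoint and vertex analysis. The only differences are organizational --- you argue necessity and sufficiency separately (using that $G$ is diagonal at $\theta=1$) while the paper runs one chain of equivalences and simplifies at the end --- and both versions are sound.
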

\begin{proof}
The characteristic polynomial for the matrix $G$ has the following form
\begin{multline}
 q_1(\lambda)=\lambda^2-(\tr G)\lambda+\det G
 =\lambda^2+ [(\varkappa+1)\omega_1-2]\lambda+\\[1mm]+\big[\varkappa\omega_1^2+\omega_2^2+1-(\varkappa + 1)\omega_1\big].
\label{eq:nsc1}
\end{multline}
We set $a_0:=q_1(1)=1-(\tr G)+\det G=\varkappa\omega_1^2+\omega_2^2\geq 0$ and notice that $a_0=q_1(1)=0$ if and only if $G=I$.
We transform the unit circle $\{|\lambda|\leq 1\}$ with a punctured point $(1,0)$ on $\mathbb{C}$ into the closed left half-plane  $\{\Rea z\leq 0\}$ and put
\begin{equation*}
 \hat{q}_1(z):=(z-1)^2q_1\Big(\frac{z+1}{z-1}\Big)
 =a_0z^2+2a_1z+a_2,
\end{equation*}
where $a_1=1-\det G$, $a_2=1+\tr G+\det G$.
It is well known that for $a_0>0$ the roots $\hat{q}_1(z)$ lie in $\{\Rea z\leq 0\}$ under the conditions $a_1\geq 0$ and $a_2\geq 0$, i.e.
\[
 \varkappa\omega_1^2+\omega_2^2-(\varkappa+1)\omega_1\leq 0,
\ \
 \varkappa\omega_1^2+\omega_2^2-2(\varkappa+1)\omega_1+4\geq 0.
\]
We rewrite these conditions as
\begin{gather}
 \beta(4\varkappa\alpha^2\theta+1-\theta)-(\varkappa+1)\alpha\leq 0\ \ \text{for}\ \ 0\leq\theta\leq 1,
\label{eq:nsc5}\\[1mm]
 r(\theta):=\beta^2(4\varkappa\alpha^2-1)\theta^2-\beta\big(2(\varkappa+1)\alpha-\beta\big)\theta+1\geq 0\ \ \text{for}\ \
 0\leq\theta\leq 1.
\label{eq:nsc7}
\end{gather}
\par The left-hand side of \eqref{eq:nsc5} is linear in $\theta$, thus it suffices to test it for $\theta=0,1$ that leads us to the condition
\begin{gather}
 \beta\leq\min\Big\{(\varkappa+1)\alpha,\frac{\varkappa+1}{4\varkappa\alpha}\Big\}.
\label{eq:nsc9}
\end{gather}
\par Next we analyze condition \eqref{eq:nsc7}.
Notice that $r(0)=1$ and due to \eqref{eq:nsc9} we have $2(\varkappa+1)\alpha-\beta>0$.
For $a=4\varkappa\alpha^2-1\neq 0$ the vertex of the parabola $r(\theta)$ is given by
\begin{equation*}
 \theta_v=\frac{2(\varkappa+1)\alpha-\beta}{2\beta(4\varkappa\alpha^2-1)}.
\end{equation*}
\par For $4\varkappa\alpha^2-1>0$ the property $\theta_v>1$ means that
$\beta<\frac{(\varkappa+1)\alpha}{4\varkappa\alpha^2-0.5}$,
and it holds due to \eqref{eq:nsc9}.
Hence condition \eqref{eq:nsc7} reduces to $r(1)\geq 0$, i.e.
\begin{equation}
 4\varkappa\alpha^2\beta^2-2(\varkappa+1)\alpha\beta+1
 =4\varkappa\alpha^2\Big(\beta-\frac{1}{2\varkappa\alpha}\Big)\Big(\beta-\frac{1}{2\alpha}\Big)\geq 0.
\label{eq:r10}
\end{equation}
For $4\varkappa\alpha^2-1<0$ we have $\theta_v<0$, so that condition \eqref{eq:nsc7} reduces again to $r(1)\geq 0$.
For  $4\varkappa\alpha^2-1=0$ the condition also reduces to $r(1)\geq 0$ (since $r(0)=1$).
\par Since $\varkappa\geq 1$, inequality \eqref{eq:r10} means that either of the conditions
\begin{equation}
\beta\leq\frac{1}{2\varkappa\alpha},\ \ \beta\geq\frac{1}{2\alpha}
\label{eq:nsc11}
\end{equation}
holds.
Combining them with \eqref{eq:nsc9}, we obtain \eqref{eq:nsc}.
\end{proof}
Now we turn to the spectral criterion \eqref{eq:stab9}.
\begin{theorem}
\label{th:2}
The spectral criterion \eqref{eq:stab9} holds if and only if
\begin{gather}
  \beta\leq \min\Big\{2\alpha,\frac{1}{2\varkappa\alpha}\Big\}.
\label{eq:nssc}
\end{gather}
\end{theorem}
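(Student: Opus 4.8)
The plan is to recast the spectral criterion \eqref{eq:stab9} as the matrix inequality $G^*G\le I$ and to exploit the positive semidefiniteness theory of $2\times2$ Hermitian matrices. Since the eigenvalues of $(G^*G)(\xi)$ are real and nonnegative, condition \eqref{eq:stab9} is equivalent to $\Phi(\xi):=I-(G^*G)(\xi)\ge0$ for all $\theta\in[0,1]$. From the explicit form of $G^*G$ displayed above, the diagonal entries of $\Phi$ are $d_1=1-(1-\omega_1)^2-\omega_2^2$ and $d_2=1-(1-\varkappa\omega_1)^2-\omega_2^2$, while its off-diagonal entries are $\pm\mathbf{i}(1-\varkappa)\omega_1\omega_2$. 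For a Hermitian $2\times2$ matrix one has $\Phi\ge0$ if and only if $d_1\ge0$, $d_2\ge0$ and $\det\Phi\ge0$; moreover $d_1\ge0$ and $d_2\ge0$ are by themselves necessary for $\Phi\ge0$.

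First I would extract the necessary part, which already pins down \eqref{eq:nssc}. Substituting $\omega_1=4\alpha\beta\theta$ and $\omega_2^2=4\beta^2\theta(1-\theta)$ and factoring gives $d_1=4\beta\theta\,g_1(\theta)$ and $d_2=4\beta\theta\,g_2(\theta)$ with $g_1,g_2$ affine in $\theta$, namely $g_1(\theta)=(2\alpha-\beta)-\beta(4\alpha^2-1)\theta$ and $g_2(\theta)=(2\varkappa\alpha-\beta)-\beta(4\varkappa^2\alpha^2-1)\theta$. Requiring $g_1,g_2\ge0$ on $[0,1]$ and using that an affine function is nonnegative on $[0,1]$ iff it is nonnegative at the two endpoints yields $\beta\le2\alpha$, $\beta\le\frac{1}{2\alpha}$ from $g_1$ and $\beta\le2\varkappa\alpha$, $\beta\le\frac{1}{2\varkappa\alpha}$ from $g_2$. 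Since $\varkappa\ge1$, the binding constraints are $\beta\le2\alpha$ and $\beta\le\frac{1}{2\varkappa\alpha}$, which is exactly \eqref{eq:nssc}. This simultaneously proves necessity and secures the two diagonal conditions needed for sufficiency.

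It then remains, for sufficiency, to check $\det\Phi\ge0$ on $[0,1]$ under \eqref{eq:nssc}. Here $\det\Phi=d_1d_2-(1-\varkappa)^2\omega_1^2\omega_2^2$, and dividing by the common factor $16\beta^2\theta^2>0$ reduces the claim (for $\theta\in(0,1]$) to the quadratic inequality $P(\theta):=g_1(\theta)g_2(\theta)-4(\varkappa-1)^2\alpha^2\beta^2\,\theta(1-\theta)\ge0$. I would write $g_1,g_2$ through their endpoint values, $g_1(\theta)=(1-\theta)u_0+\theta u_1$ and $g_2(\theta)=(1-\theta)v_0+\theta v_1$ with $u_0=2\alpha-\beta$, $u_1=2\alpha(1-2\alpha\beta)$, $v_0=2\varkappa\alpha-\beta$, $v_1=2\varkappa\alpha(1-2\varkappa\alpha\beta)$, all nonnegative under \eqref{eq:nssc}. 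This recasts $P$ as the binary quadratic form
\[
 P(\theta)=(1-\theta)^2u_0v_0+\theta^2u_1v_1+\theta(1-\theta)\big[u_0v_1+u_1v_0-4(\varkappa-1)^2\alpha^2\beta^2\big]
\]
in the nonnegative variables $1-\theta$ and $\theta$, whose square coefficients $u_0v_0$ and $u_1v_1$ are nonnegative.

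The main obstacle is the cross term. Since $\theta(1-\theta)\ge0$ and the diagonal coefficients are nonnegative, the form is nonnegative on $[0,1]$ as soon as $u_0v_1+u_1v_0+2\sqrt{u_0v_0u_1v_1}\ge4(\varkappa-1)^2\alpha^2\beta^2$, that is,
\[
 \sqrt{u_0v_1}+\sqrt{u_1v_0}\ge 2(\varkappa-1)\alpha\beta .
\]
The bulk of the work is to verify this scalar inequality under \eqref{eq:nssc}; I expect to do it by squaring and analysing the resulting polynomial in $\beta$, with a short split according to the sign of $4\varkappa\alpha^2-1$ (equivalently, on whether the vertex of the convex parabola $P$, whose leading coefficient equals $\beta^2(4\varkappa\alpha^2-1)^2\ge0$, lies to the left of $0$ or to the right of $1$, so that its minimum over $[0,1]$ is attained at an endpoint and equals $u_0v_0$ or $u_1v_1$). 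The inequality is sharp — it degenerates to an equality along $4\varkappa\alpha^2=1$ with $\beta=2\alpha$, where both bounds in \eqref{eq:nssc} coincide — which confirms that \eqref{eq:nssc} is the exact threshold and signals that the estimate must be carried out carefully rather than through a lossy bound.
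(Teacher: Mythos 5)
Your reduction is sound and genuinely different from the paper's: you test positive semidefiniteness of $\Phi=I-G^*G$ entrywise (two diagonal entries plus the determinant), whereas the paper works with the characteristic polynomial $q_2(\lambda)=\lambda^2-\tr(G^*G)\lambda+(\det G)^2$ and, decisively, factorizes $q_2(1)=b^2-(\varkappa-1)^2\omega_1^2$ with $b=\varkappa\omega_1^2+\omega_2^2-(\varkappa+1)\omega_1$, which splits the determinant condition into two inequalities that are linear in $\theta$ and so avoids any quadratic analysis. Your necessity half is complete and correct --- arguably cleaner than the paper's, which must invoke the auxiliary trace bound \eqref{eq:nssc8} to discard the spurious branch $\beta\ge\max\{2\varkappa\alpha,\frac{1}{2\alpha}\}$, while your two diagonal conditions $d_1\ge0$, $d_2\ge0$ deliver \eqref{eq:nssc} directly.

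The gap is in sufficiency: you reduce everything to the scalar inequality $\sqrt{u_0v_1}+\sqrt{u_1v_0}\ge2(\varkappa-1)\alpha\beta$, but that inequality is only announced, not proved (``the bulk of the work is to verify this''), and since it is sharp --- equality along $4\varkappa\alpha^2=1$, $\beta=2\alpha$, as you note --- it cannot be waved through; as written, your argument establishes only the ``only if'' direction of the theorem. The plan does close, and more simply than you anticipate: it suffices to prove the single stronger bound $u_1v_0\ge 4(\varkappa-1)^2\alpha^2\beta^2$, which makes the term $\sqrt{u_0v_1}$ superfluous. Indeed, both factors of $u_1v_0=2\alpha(1-2\alpha\beta)(2\varkappa\alpha-\beta)$ are nonnegative and decreasing in $\beta$ on $(0,\beta_{\max}]$ with $\beta_{\max}=\min\{2\alpha,\frac{1}{2\varkappa\alpha}\}$, while the right-hand side is increasing in $\beta$, so the bound need only be checked at $\beta=\beta_{\max}$; there it reduces to $4\varkappa\alpha^2\le1$ in the case $\beta_{\max}=2\alpha$ and to $4\varkappa\alpha^2\ge1$ in the case $\beta_{\max}=\frac{1}{2\varkappa\alpha}$, each of which is precisely the condition defining that case. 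With this estimate (or an equivalent verification that the vertex of the convex parabola $P$ lies outside $(0,1)$) supplied, your argument becomes a complete alternative proof; without it, the decisive step is missing.
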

\begin{proof}
The characteristic polynomial of $G^*G$ has the following form
\[
 q_2(\lambda)=\lambda^2-\tr(G^*G)\lambda+(\det G)^2.
\]
Since $\lambda_l(G^*G)\geq 0$,
the property $|\lambda_l(G^*G)|\leq 1$ means validity of the conditions
\begin{gather}
 \frac{1}{2}\tr(G^*G)\leq 1,\ \ q_2(1)=1-\tr(G^*G)+(\det G)^2\geq 0.
\label{eq:nssc5}
\end{gather}

\par The first of them has the form
\[
 \frac{\varkappa^2+1}{2}\,\omega_1^2+\omega_2^2-(\varkappa+1)\omega_1\leq 0
\]
and can be specified as
\begin{gather}
 8\alpha^2\beta^2(\varkappa^2+1)\theta^2 + 4\beta^2\theta(1-\theta)-4\alpha\beta(\varkappa + 1)\theta\leq 0\ \
 \text{for}\ \ 0\leq\theta\leq 1.
\label{eq:nssc7}
\end{gather}
After dividing by $4\beta\theta$ we get that it suffices to confine ourselves with the values $\theta=0,1$ that leads to the condition
\begin{gather}
\beta\leq\min\Big\{(\varkappa + 1)\alpha,\frac{\varkappa + 1}{2(\varkappa^2+1)\alpha}\Big\}.
\label{eq:nssc8}
\end{gather}

\par In order to transform the second condition \eqref{eq:nssc5} we notice that
\[
 \tr(G^*G)=2(b+1)+(\varkappa-1)^2\omega_1^2,\ \
 (\det G)^2=(b+1)^2,
\]
where $b:=\varkappa\omega_1^2+\omega_2^2-(\varkappa+1)\omega_1$, see \eqref{eq:nsc1}.
Hence the following factorization holds
\[
 q_2(1)=b^2-(\varkappa-1)^2\omega_1^2=\big(b-(\varkappa-1)\omega_1\big)\big(b+(\varkappa-1)\omega_1\big),
\]
that is decisive for the simplicity of our analysis.
Since $\varkappa\geq 1$, the condition $q_2(1)\geq 0$  is equivalent to validity of either of the conditions
\[
 \varkappa\omega_1^2+\omega_2^2-2\omega_1\leq 0,\ \ \varkappa\omega_1^2+\omega_2^2-2\varkappa\omega_1\geq 0,
\]
i.e., more specifically, to validity of either of the conditions
\begin{gather*}
 \beta(4\varkappa\alpha^2\theta+1-\theta)-2\alpha\leq 0\ \ \text{for}\ \ 0\leq\theta\leq 1,
\\[1mm]
 \beta(4\varkappa\alpha^2\theta+1-\theta)-2\varkappa\alpha\geq 0\ \ \text{for}\ \ 0\leq\theta\leq 1.
\label{eq:nssc9}
\end{gather*}
As above they respectively mean that the inequalities
\begin{gather}
 \beta\leq\min\Big\{2\alpha,\frac{1}{2\varkappa\alpha}\Big\},\ \
 \beta\geq\max\Big\{2\varkappa\alpha,\frac{1}{2\alpha}\Big\}
\label{eq:nssc11}
\end{gather}
hold. Combining them with  \eqref{eq:nssc8} we complete the proof.
\end{proof}

\par It is essential that the function on the right-hand side of condition \eqref{eq:nssc} reaches its maximal value at
$\alpha=\alpha_\ast:=\frac{1}{2\sqrt{\varkappa}}\leq\frac12$
and the maximal value equals $\frac{1}{\sqrt{\varkappa}}\leq 1$.
Hence the criterion coincides with the standard CFL stability condition $\beta\leq 1$ if and only if $\alpha=\alpha_*$ and $\varkappa=1$.
The criterion gives an important information on the optimal choice of $\alpha$ since in practice for the original non-linear problem $\alpha$ is normally sought experimentally.
Note also that criterion \eqref{eq:nssc} and the necessary condition \eqref{eq:nsc} coincide only in the case $\alpha\geq\alpha_*$.
\par We call attention to a paradoxical moment: criterion \eqref{eq:nssc} becomes \textit{stronger} as the coefficient of  ``effective viscosity'' $\varkappa$ increases (it is harder to say that about the necessary condition \eqref{eq:nsc}).
Therefore the best choice in the present bounds is $\alpha_s=0$, i.e. $\varkappa=1$.
But this conclusion is not universal in practice and it is known that in some situations $\alpha_s>0$ has to be taken (see, for example, \cite{ZG16}).
\par In Fig. \ref{ris:crit} we compare the necessary condition, the criterion of stability and the sufficient condition as well as the results of numerical experiments for the original system \eqref{eq:bgd} for $p(\rho)=\rho^2$ (the scaled case of the shallow water equations)
and $\varkappa=\frac{7}{3}$.
The sufficient condition was obtained in \cite{SSh13} only for these $p(\rho)$ and $\varkappa$ by the energy method and has the form
\[
\beta\leq\min\left\{\frac{2\alpha}{1+6\alpha+4\alpha^2},\frac{4\alpha}{1+6\alpha+16\alpha^2}\right\}.
\]
Notice that the first fraction in it is less than the second one for $0<\alpha<\frac{3+\sqrt{17}}{8}\approx 0.890$.
The corresponding graph is almost flat for $0.3\leq\alpha\leq 0.9$ in contrast to the cases of necessary condition and criterion.
The computations are accomplished for $0\leq t\leq 0.5$ for the Riemann problem with the discontinuous initial data
\begin{equation*}
\rho_0(x) =
\begin{cases}
 1, & x < 0\\
 0.1, & x > 0
\end{cases},\ \
u_0(x) =
\begin{cases}
 0.1, & x < 0,\\
 0, & x > 0
\end{cases}
\end{equation*}
for both schemes  \eqref{eq:dssw1}-\eqref{eq:dssw4} and \eqref{eq:dssw1 A}-\eqref{eq:dssw4 A} with $h=1/125$.
\begin{figure}[h]
\begin{minipage}[h]{0.49\linewidth}
\center{\includegraphics[width=1\linewidth,height=0.2\textheight]{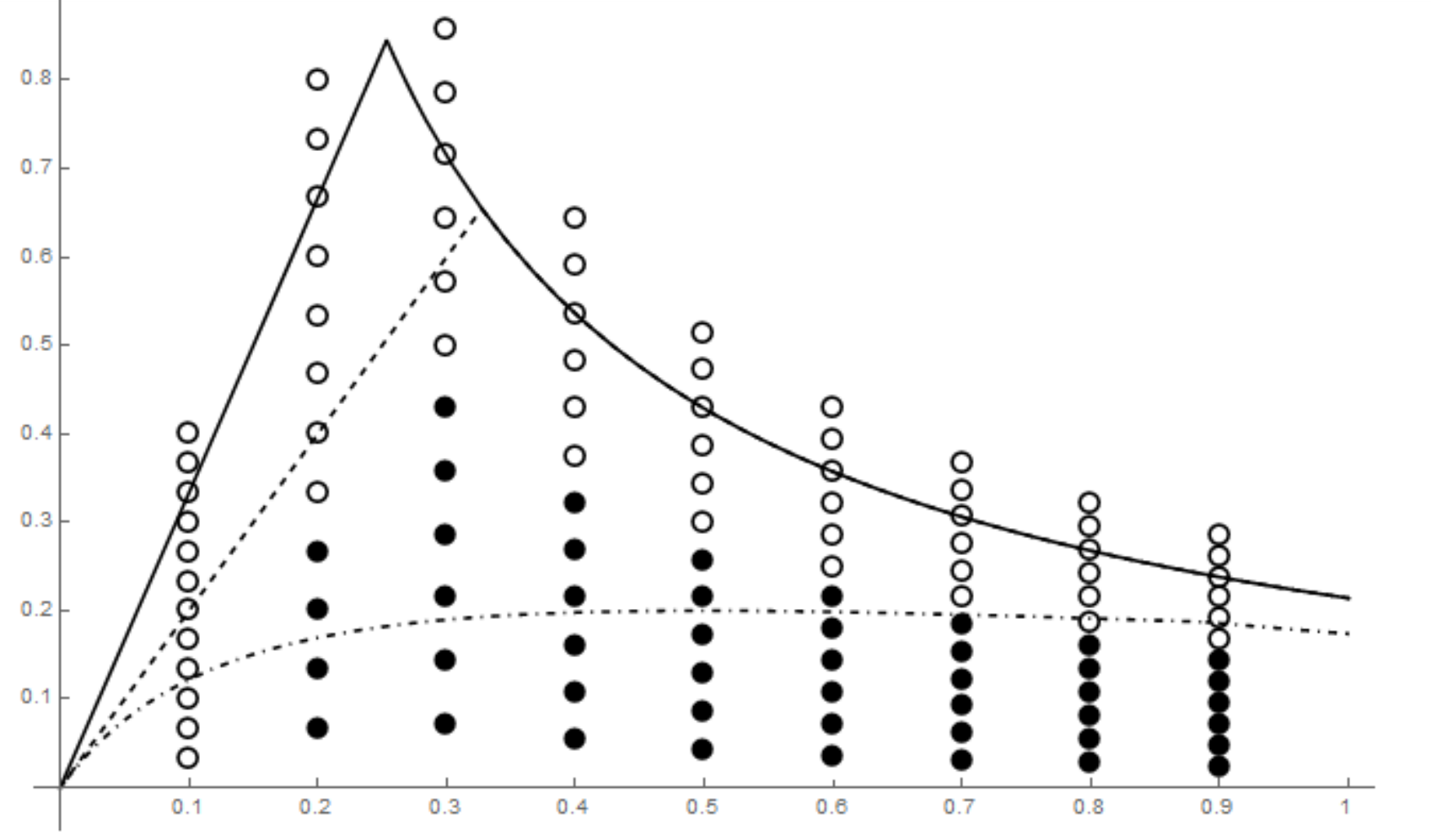}%eps}
\\ (a) The standard scheme}
\end{minipage}
\hfill
\begin{minipage}[h]{0.49\linewidth}
\center{\includegraphics[width=1\linewidth,height=0.2\textheight]{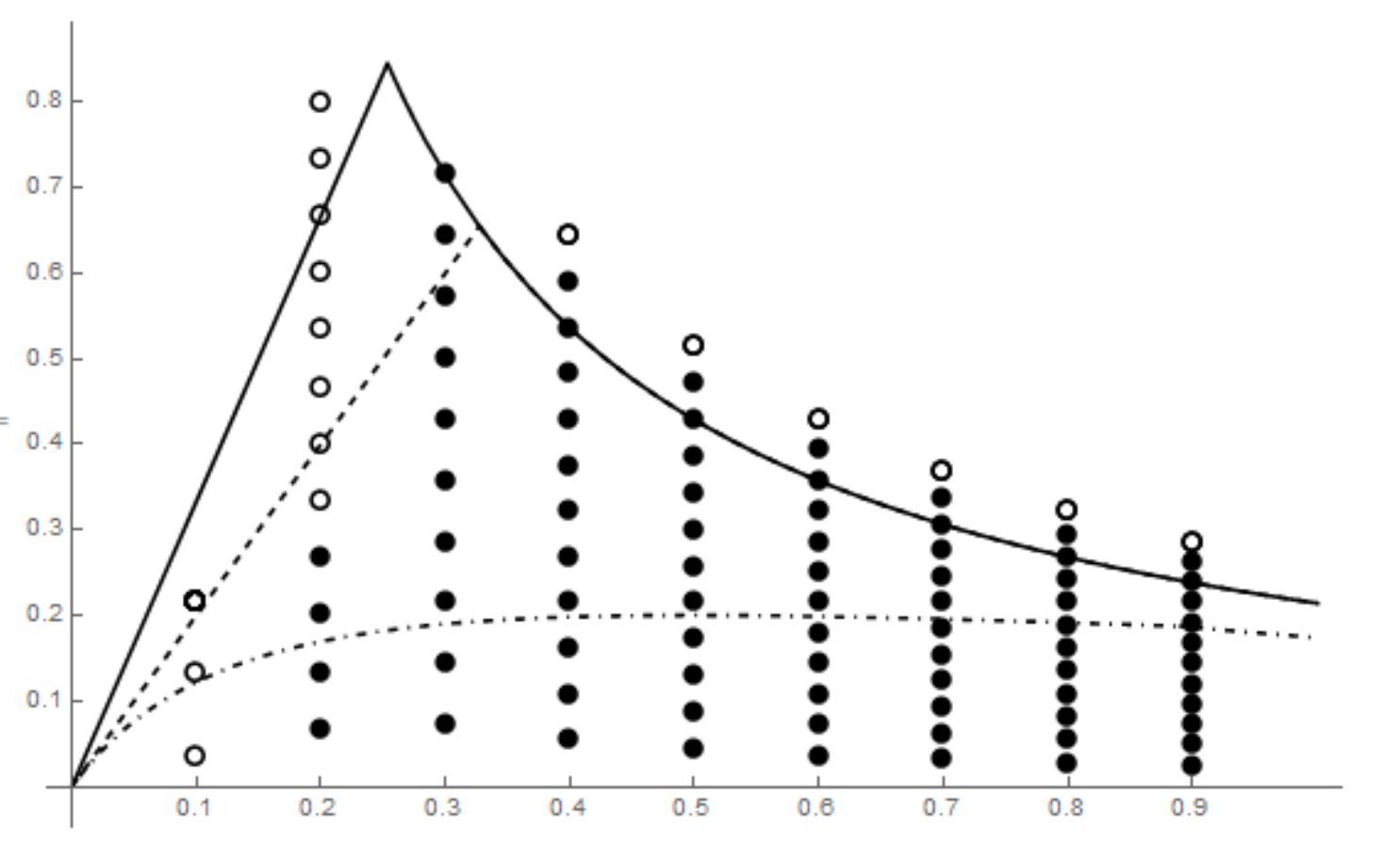}%eps}
\\ (b) The ``enthalpy'' scheme}
\end{minipage}
\caption{{\small{The weak conservativeness analysis: the necessary condition (solid line), the criterion (dash line), the sufficient condition (dotdash line) together with conservative (painted balls) and non-conservative (unpainted balls) computations for the Riemann problem in dependence with $\alpha$}}}
\label{ris:crit}
\end{figure}

\par We observe a good correspondence of the obtained criterion with the experimental results, and that the sufficient condition underestimates the criterion up to several times in the most interesting region $\alpha\approx\alpha_*$.
Also the results for the ``enthalpy'' scheme \eqref{eq:dssw1 A}-\eqref{eq:dssw4 A} are clearly different from and better than for the standard one \eqref{eq:dssw1}-\eqref{eq:dssw4} though the above linearized analysis gives the same results for them.

\par We have identified non-conservative computations by noticeable well-known oscillations of the numerical solutions (some of  computations have not even been completed due to overflow).
In Fig. \ref{ris:con} we give an example of conservative and non-conservative solutions $\rho$ and $u$
for the ``enthalpy'' scheme (at time $t=0.5$) for $\alpha =0.4$ and two neighboring values of $\beta$ from Fig. \ref{ris:crit} (b).

\begin{figure}[h]
\begin{minipage}[h]{0.49\linewidth}
\center{\includegraphics[width=1\linewidth]{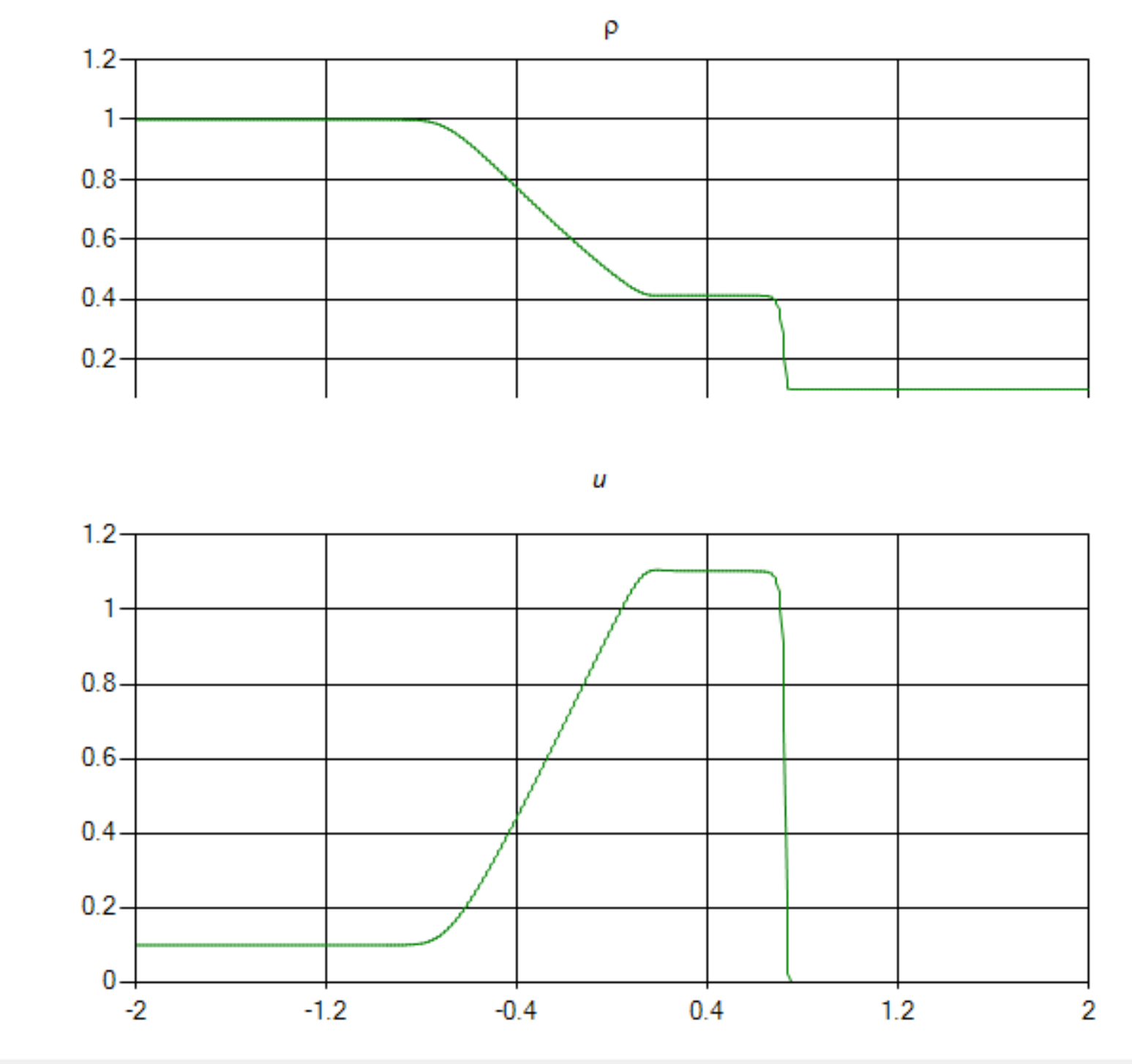}%eps}
\\ (a) The conservative solution, $\beta\approx 0.589$}
\end{minipage}
\hfill
\begin{minipage}[h]{0.49\linewidth}
\center{\includegraphics[width=1\linewidth]{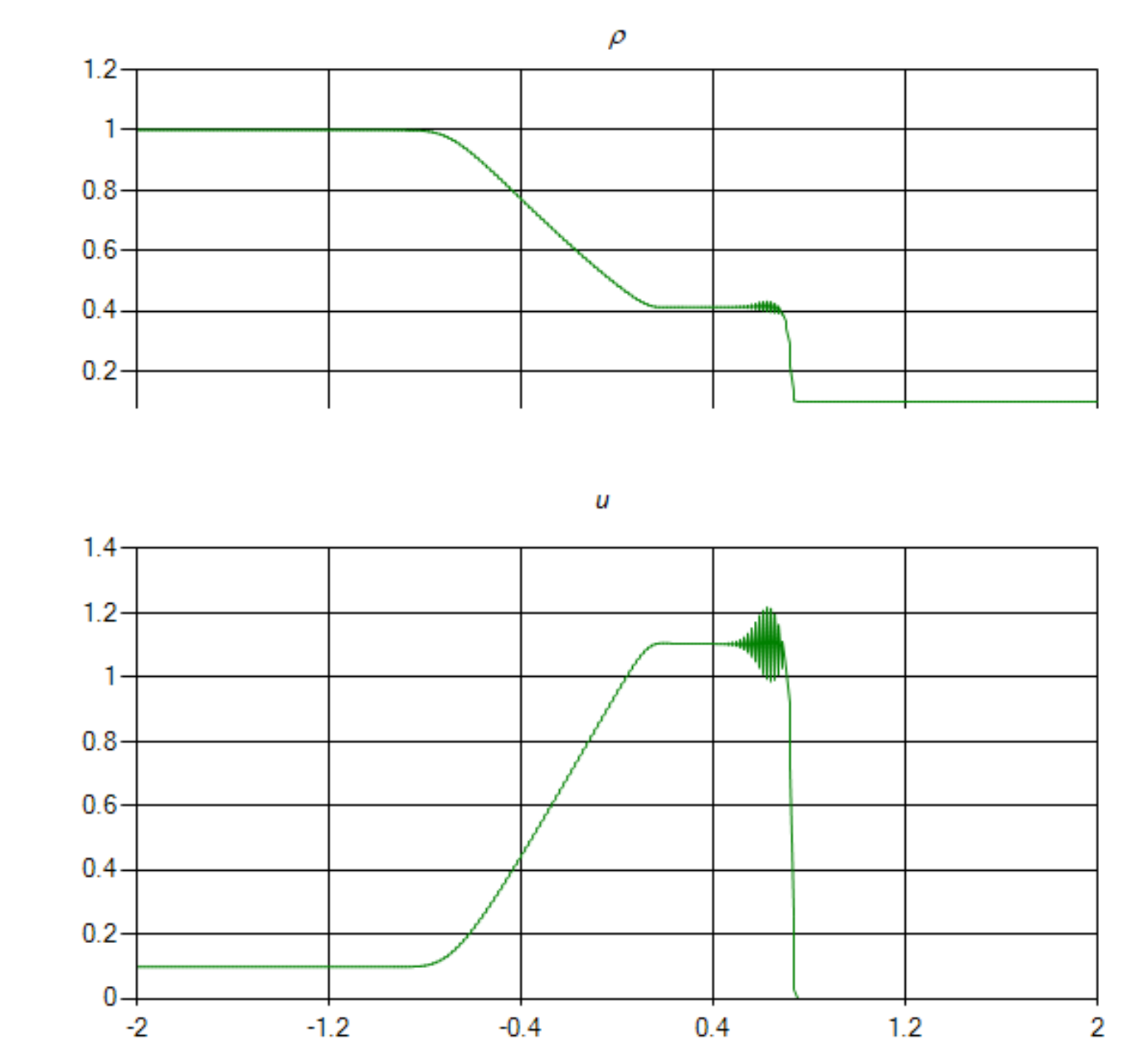}%eps}
\\ (b) The non-conservative solution, $\beta\approx 0.643$}
\end{minipage}
\caption{{\small{The examples of conservative and non-conservative solutions for the ``enthalpy``scheme for $\alpha =0.4$ (at time $t=0.5$)}}}
\label{ris:con}
\end{figure}

\section{\large The case of the schemes based on a simplified regularization}

We also consider a simplified (quasi-hydrodynamic \cite{E07,Sh09,Z08}) regularization \eqref{eq:qgd1}-\eqref{eq:qgd3}, where the terms with $\partial_x(\rho u)$ are omitted, in particularly, it becomes $w=\hat{w}$.
Correspondingly in schemes \eqref{eq:dssw1}-\eqref{eq:dssw4} and \eqref{eq:dssw1 A}-\eqref{eq:dssw4 A} we have to omit both terms with respectively $\delta(\rho u)$ and $(\tau \partial_x)_h(\rho u)$.
In the linearized scheme the term $\tau(\rho_*)c_*^2$ disappears from equation \eqref{eq:ds2n}, hence now $\varkappa=\alpha_s$.
Notice that usually $0<\alpha_s\leq 1$ though in specific cases $\alpha_s>1$ can be also taken.
\begin{theorem}
For the simplified scheme based on the quasi-hydrodynamic regularization the following results are valid:
\par (1) in the case $0\leq\alpha_s\leq 1$
the necessary condition \eqref{eq:stab7} and criterion \eqref{eq:stab9} hold if and only if respectively
\begin{gather}
 \beta\leq\min\Big\{(\alpha_s+1)\alpha,\frac{1}{2\alpha}\Big\},
\label{eq:nsc s}\\[1mm]
  \beta\leq \min\Big\{2\alpha_s\alpha,\frac{1}{2\alpha}\Big\};
\label{eq:nssc s}
\end{gather}

\par (2) in the case $\alpha_s\geq 1$ the results of Theorems \ref{th:1} and \ref{th:2} remain valid with $\varkappa=\alpha_s$.
\end{theorem}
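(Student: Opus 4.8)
The plan is to exploit the observation, already recorded just before the statement, that the simplified quasi-hydrodynamic scheme linearizes to exactly the recurrent form \eqref{eq:ds1e}--\eqref{eq:ds2e}, and hence to the amplification matrix $G(\xi)$ of \eqref{eq:stab3}, the only difference being that the coefficient $\varkappa=\alpha_s+1$ is replaced by $\varkappa=\alpha_s$. Consequently the entire spectral machinery --- the necessary condition \eqref{eq:stab7}, the criterion \eqref{eq:stab9}, and Lemma \ref{lem1} --- carries over unchanged, and the task reduces to re-examining the proofs of Theorems \ref{th:1} and \ref{th:2} with $\varkappa=\alpha_s$. For part (2), where $\alpha_s\geq 1$ gives $\varkappa=\alpha_s\geq 1$, those proofs apply verbatim and immediately yield \eqref{eq:nsc} and \eqref{eq:nssc} with $\varkappa=\alpha_s$. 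The work is therefore concentrated in part (1), $0\leq\alpha_s\leq 1$, i.e. $\varkappa\in[0,1]$, where I must locate the single place in each proof at which the hypothesis $\varkappa\geq 1$ was used and redo it.

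For the necessary condition I would follow the proof of Theorem \ref{th:1} unchanged up to and including the reduction of \eqref{eq:nsc7} to $r(1)\geq 0$: the derivation of \eqref{eq:nsc9} and the vertex/endpoint discussion (the three sign cases of $4\varkappa\alpha^2-1$) use only $\varkappa>0$ together with \eqref{eq:nsc9}, never $\varkappa\geq 1$. The one genuinely $\varkappa$-sensitive step is reading off \eqref{eq:r10}: its roots are $\tfrac{1}{2\varkappa\alpha}$ and $\tfrac{1}{2\alpha}$, and for $\varkappa<1$ their order reverses, so the upward parabola is nonnegative iff $\beta\leq\tfrac{1}{2\alpha}$ or $\beta\geq\tfrac{1}{2\varkappa\alpha}$ (for $\varkappa=0$ the quadratic degenerates to the linear bound $\beta\leq\tfrac{1}{2\alpha}$). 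Intersecting this with \eqref{eq:nsc9} and using that $\tfrac{\varkappa+1}{4\varkappa\alpha}<\tfrac{1}{2\varkappa\alpha}$ for $\varkappa<1$, which renders the branch $\beta\geq\tfrac{1}{2\varkappa\alpha}$ infeasible and the bound $\tfrac{\varkappa+1}{4\varkappa\alpha}$ inactive, one is left with precisely $\beta\leq\min\{(\varkappa+1)\alpha,\tfrac{1}{2\alpha}\}$, which is \eqref{eq:nsc s}.

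For the criterion I would keep the derivation of the trace condition \eqref{eq:nssc8} (it is sign-free in $\varkappa$) and then treat the second condition $q_2(1)\geq 0$ through its factorization. Here it is cleanest to write both factors, after dividing by $4\beta\theta$, as $g(\theta)-2\varkappa\alpha$ and $g(\theta)-2\alpha$ with the linear function $g(\theta):=\beta(4\varkappa\alpha^2\theta+1-\theta)$, so that for each $\theta\in(0,1]$ the sign of $q_2(1)$ equals that of $\bigl(g(\theta)-2\varkappa\alpha\bigr)\bigl(g(\theta)-2\alpha\bigr)$ (and $q_2(1)=0$ at $\theta=0$). Requiring $q_2(1)\geq 0$ for all $\theta$ thus means that the image of the monotone $g$, the segment with endpoints $g(0)=\beta$ and $g(1)=4\varkappa\alpha^2\beta$, must avoid the open interval with endpoints $2\varkappa\alpha$ and $2\alpha$; by connectedness this holds iff the segment lies wholly on one side. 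For $\varkappa<1$ the interval is $(2\varkappa\alpha,2\alpha)$, giving the alternatives $\beta\leq\min\{2\varkappa\alpha,\tfrac{1}{2\alpha}\}$ or $\beta\geq\max\{2\alpha,\tfrac{1}{2\varkappa\alpha}\}$. Intersecting with \eqref{eq:nssc8}, the upper alternative is impossible since $(\varkappa+1)\alpha<2\alpha$, while the lower one survives because both bounds in \eqref{eq:nssc8} are no smaller than $\min\{2\varkappa\alpha,\tfrac{1}{2\alpha}\}$; this yields \eqref{eq:nssc s}.

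I expect the main obstacle to be exactly this ``avoid the open interval'' equivalence: the statement that the pointwise-in-$\theta$ disjunction ``$g(\theta)\leq 2\varkappa\alpha$ or $g(\theta)\geq 2\alpha$'' holds for every $\theta$ if and only if one of the two global bounds holds. Making it rigorous rests on $g$ being linear, hence monotone with connected image, and it is precisely this reformulation that explains why all the quantities $2\varkappa\alpha$, $2\alpha$ (and $\tfrac{1}{2\varkappa\alpha}$, $\tfrac{1}{2\alpha}$) swap roles as $\varkappa$ crosses $1$; it also subsumes the $\varkappa\geq 1$ analysis of Theorem \ref{th:2}, so the same computation covers both parts at once. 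Everything else is routine endpoint bookkeeping.
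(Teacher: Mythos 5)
Your proposal is correct and follows essentially the same route as the paper: the authors likewise rerun the proofs of Theorems \ref{th:1} and \ref{th:2} with $\varkappa=\alpha_s$ and simply note that for $0\leq\varkappa\leq 1$ the roots/factors swap order, so that \eqref{eq:nsc11} and \eqref{eq:nssc11} are replaced by their mirrored versions, which combined with \eqref{eq:nsc9} and \eqref{eq:nssc8} yield \eqref{eq:nsc s} and \eqref{eq:nssc s}. The only difference is that you make explicit the connectedness argument justifying the pointwise-in-$\theta$ to global quantifier swap in the factorized condition, a step the paper leaves implicit.
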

\begin{proof}
The above given analysis holds true except for some changes in the case $0\leq\varkappa=\alpha_s\leq 1$.
In this case, inequalities \eqref{eq:nsc11} are replaced by the following ones
\[
\beta\leq\frac{1}{2\alpha},\ \ \beta\geq\frac{1}{2\varkappa\alpha};
\]
they being combined with \eqref{eq:nsc9} lead to \eqref{eq:nsc s}.
Also inequalities \eqref{eq:nssc11} are replaced by the following ones
\[
 \beta\leq\min\Big\{2\varkappa\alpha,\frac{1}{2\alpha}\Big\},\ \
 \beta\geq\max\Big\{2\alpha,\frac{1}{2\varkappa\alpha}\Big\},
\]
they being combined with \eqref{eq:nssc8} lead to \eqref{eq:nssc s}.
\end{proof}
The maximal value of the function on the right-hand side of criterion \eqref{eq:nssc s} is reached at
$\alpha=\alpha_*:=\frac{1}{2\sqrt{\alpha_s}}\geq\frac12$
and equals $\sqrt{\alpha_s}\leq 1$.
We notice that the necessary condition \eqref{eq:nsc s} is especially rough compared to criterion \eqref{eq:nssc s} for $\alpha_s\approx 0$, including the case $\alpha_s=0$ when actually the stability is absent at all.

\begin{acknowledgement}
The study was partially supported by the RFBR, project nos. 16-01-00048 and 18-01-00587.
\end{acknowledgement}

\end{document}